\newtheorem{defn}{Definition}[section]
\newtheorem{remark}[defn]{Remark}
\newtheorem{theorem}[defn]{Theorem}
\newtheorem{corollary}[defn]{Corollary}
\newtheorem{definition}[defn]{Definition}
\newtheorem{proposition}[defn]{Proposition}
\newtheorem{claim}[defn]{Claim}
\numberwithin{equation}{section}
\begin{document}
	
	\title{Spectrum of linearized operator at ground states of a system of Klein-Gordon equations}
	\author{Yan Cui}
	\thanks{Yan Cui is supported by the Natural Science Foundation of China (No. 12001555). }
	\address[Yan Cui]{Department of Mathematics, Jinan University, Guangzhou, P. R. China.}
	\email{cuiy32@jnu.edu.cn}

	\author{Bo Xia}
	\address[Bo Xia]{School of Mathematical Sciences,USTC, Hefei, P. R. China.}
	\email{xiabomath@ustc.edu.cn,xaboustc@hotmail.com}
	\thanks{Bo Xia was supported by NSFC No. 12171446.}
	
\author[Kai Yang]{Kai~Yang}
\thanks{Kai Yang was supported by the Jiangsu Shuang Chuang Doctoral Plan and the Natural
Science Foundation of Jiangsu Province(China): BK20200346.}
\address[Kai Yang]
{School of Mathematics, Southeast University, Nanjing, Jiangsu Province, 211189, China}
\email{yangkai99sk@gmail.com, kaiyang@seu.edu.cn}
	\maketitle
	

\begin{abstract}{
	Previously, the existence of ground state solutions of a family of systems of Klein-Gordon equations has been widely studied. In this article, we will study the linearized operator at the ground sate and  give a complete description of the spectrum for this operator in the radial case: the existence of a unique negative eigenvalue, no resonance at `1'(the bottom of the essential spectrum), no embedded eigenvalue in the essential spectrum and the spectral gap property (\textit{i.e.}, there is no eigenvalue in the interval $(0,1]$).}
\end{abstract}

\section{Introduction}	
	In this paper, we are going to study the spectral properties of the operator
		\begin{equation}\label{intr:eq:03230}
		\mathcal{D} := \begin{bmatrix}
			&-\Delta + 1-3Q^2  & 0\\
			& 0 & -\Delta +1 -\frac{3-\beta}{1+\beta}Q^2
		\end{bmatrix}
	\end{equation}
 	over $L^{2}(\mathbb{R}^3)\times L^2(\mathbb{R}^3)$, where $\beta>0$ is a parameter. Here $Q$ is the unique positive ground state \cite{Coffman1972,KMK89} of the equation
 	\begin{equation}\label{eq:kwong}
 		-\Delta\psi(x) +\psi(x) - \psi(x)^3=0,\ \ x\in\mathbb{R}^3.
 	\end{equation}

 	The operator $\mathcal{D}$ arises in the study on stability of the (non-standard) stationary solutions to the system of Klein-Gordon equations, which can be used to describe the motion of charged mesons in
electromagnetic field ({see Segal \cite{segal1965}, or the introduction part of \cite{sirakov07}}). Such solutions obey the following system of elliptic equations
 	\begin{equation}\label{intr:eq:1}
 		\left\{
 		\begin{split}
 			-\Delta u_1 +u_1 -u_1^3 -\beta u_1u_2^2&=0\\
 			-\Delta u_2 +u_2 -u_2^3 -\beta u_1^2u_2&=0
 		\end{split}
 		\right.
 	\end{equation}
 	The existence of solutions to this system of equations was proved in \cite{sirakov07} (see Theorem \ref{thm:1} below). Denote one of such solutions by the couple $(Q_1,Q_2)$. Linearizing \eqref{intr:eq:03230} at $(Q_1,Q_2)$ gives rise to the operator $\mathcal{D}$, see  Section \ref{sec:bground} for the details.
 	
 	In the following, we will denote
 	\begin{equation}
 		L_\beta := -\Delta +1-\frac{3-\beta}{1+\beta}Q^2.
 	\end{equation}
 	Thus we can rewrite
 	\begin{equation}
 		\mathcal{D}= \begin{bmatrix}
 			L_0&0\\
 			0&L_\beta
 		\end{bmatrix}.
 	\end{equation}
 	Observe that for $\beta=0$, $L_0=-\Delta + 1-3Q^2$ and for $\beta=1$, $L_1=-\Delta +1-Q^2$. Recall that $L_0$ and $L_1$ are just these operators $L_{\pm}$, which have been widely studied in {\cite{Weinstein1982,nakanishi2007,costinhuangschlag2014,Demanet2006}}. For reader's convenience, we summarize the results about the spectrum of these two operators as follows {(see Section \ref{sec:pre} for notations and notions)}.
	\begin{theorem}\label{thm:known:1}
		Let $L_0$ and $L_1$ be as above, the following assertions hold.
		\begin{enumerate}[(i)]
			\item Both $L_0$ and $L_1$ are self-adjoint operators over $L^2(\mathbb{R}^3)$.
			\item $L_0$ has a unique negative eigenvalue, denoted by $\lambda_0$, while $L_1$ is a positive operator and hence it has no negative spectrum.
			\item The point $0$ is an eigenvalue of $L_0$ with multiplicity three over $L^2(\mathbb{R}^3)$, and the corresponding eigenspace is spanned by $\partial_{x_1}Q,\partial_{x_2}Q$ and $\partial_{x_3}Q$.
			\item The point $0$ is an eigenvalue of $L_1$ with multiplicity one over $L^2(\mathbb{R}^3)$, and the corresponding eigenspace is spanned by $Q$.
			\item {For both operators $L_0$ and $L_1$, the interval $(0,1]$ contains no spectral points}. What's more, the {threshold} $1$ is not a resonance for both of them {(in the radial direction)}.
			\item Both of these two operators have continuous spectrum $[1,\infty)$, which contains no embedded eigenvalues.
		\end{enumerate}
	\end{theorem}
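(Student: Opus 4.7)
Since Theorem \ref{thm:known:1} compiles results already established in the literature, my plan is to assemble the proofs from the standard tool kit of linear Schrödinger operator theory together with the well-known properties of the ground state $Q$: strict positivity, radial symmetry, strict radial monotonicity, and exponential decay. I would organize the proof so that each item is reduced to a spherical-harmonic decomposition on the radial half-line and then handled via either a Perron--Frobenius/Sturm oscillation argument or a classical ODE estimate.

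For (i) and the first half of (vi), I would note that $Q\in L^\infty(\mathbb{R}^3)$ decays exponentially, so $3Q^2$ and $Q^2$ are bounded, $-\Delta$-compact perturbations of $-\Delta+1$. The Kato--Rellich theorem then yields self-adjointness on $H^2(\mathbb{R}^3)$ and Weyl's theorem on the invariance of the essential spectrum under relatively compact perturbations gives $\sigma_{\mathrm{ess}}(L_0)=\sigma_{\mathrm{ess}}(L_1)=[1,\infty)$. For (ii)--(iv), I would start from the ground state equation $-\Delta Q + Q = Q^3$: a direct computation shows $L_1 Q = 0$ and $L_0(\partial_{x_j}Q)=0$. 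Since $Q>0$, Perron--Frobenius identifies $Q$ as the ground state of $L_1$ with eigenvalue $0$, so $L_1\geq 0$ and the zero eigenspace is one-dimensional on the radial subspace; on each angular-momentum $\ell\geq 1$ sector the operator acquires the centrifugal term $\ell(\ell+1)/r^2$, which yields strict positivity by a comparison argument with the radial operator. Meanwhile $\langle L_0 Q,Q\rangle=-2\int Q^4<0$, so $L_0$ has at least one negative eigenvalue; the full description (iii) and uniqueness of this eigenvalue come from the non-degeneracy result of Weinstein/Kwong, which provides that the radial kernel of $L_0$ is trivial and the $\ell=1$ kernel is exactly $\mathrm{span}\{\partial_{x_1}Q,\partial_{x_2}Q,\partial_{x_3}Q\}$. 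A Sturm oscillation count on each $\ell$-sector then shows only the $\ell=0$ sector carries a negative eigenvalue, and only one.

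For (v), after spherical-harmonic decomposition I would reduce the claim to the 1D half-line eigenproblem for $-\partial_r^2-\frac{2}{r}\partial_r+\ell(\ell+1)/r^2+1-cQ^2$ with $c\in\{1,3\}$, and rule out spectrum in $(0,1)$ by a Birman--Schwinger / comparison argument combined with what we already know about the kernel and negative eigenvalue. To exclude a resonance at the threshold $1$ in the radial direction, I would construct the two linearly independent radial solutions of $L_\pm u=u$ at infinity (one decaying like $1/r$, one growing linearly) using a Jost-solution ODE analysis, and show that the unique solution regular at the origin matches the linearly-growing branch, so it cannot lie in any reasonable weighted space, following the strategy of \cite{costinhuangschlag2014,Demanet2006}. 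Finally, for the remaining part of (vi), absence of embedded eigenvalues in $[1,\infty)$ follows from Kato's theorem on positive eigenvalues of Schrödinger operators with short-range (in fact exponentially decaying) potentials, together with an Agmon-type unique continuation argument at infinity.

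The main technical obstacle is item (v), and specifically the threshold analysis: ruling out a resonance at $1$ requires a delicate matching between the solution regular at $r=0$ and the asymptotic behavior at $r=\infty$, where the potential's exponential decay still leaves a genuine ODE connection problem. The other items are essentially mechanical once the ground-state facts are invoked, so I would allot the bulk of the work---and lean most heavily on the references listed in the introduction---to this threshold step.
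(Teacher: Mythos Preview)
Your proposal is essentially compatible with the paper's treatment, which is purely citational: the paper dispatches (i) to \cite{helffer2013,Reed1978}, (ii)--(iv) to \cite{Weinstein1985}, (vi) to \cite{Kato1959}, and (v) to the chain \cite{Demanet2006,costinhuangschlag2014,LY}. Your sketch fleshes out exactly what those references do, and your identification of (v) as the main obstacle is correct.

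There is, however, one point where you understate the difficulty. You propose to handle the gap property---no eigenvalues in $(0,1)$---by ``a Birman--Schwinger / comparison argument combined with what we already know about the kernel and negative eigenvalue,'' reserving the Costin--Huang--Schlag machinery only for the threshold resonance at $1$. This does not work: the absence of eigenvalues in $(0,1)$ is \emph{not} a soft consequence of the kernel structure and general operator inequalities. As the paper's historical remarks make explicit, the gap property was first only verified numerically \cite{Demanet2006}, and its rigorous proof in \cite{costinhuangschlag2014} (radial) and \cite{LY} (nonradial) requires a quantitative piecewise-analytic approximation of $Q$ together with Sturm oscillation/comparison on the half-line---the same level of input you correctly invoke for the threshold. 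A generic Birman--Schwinger count would only tell you how many eigenvalues lie below $1$, not where they sit relative to $0$; knowing that $0$ is in the kernel does not by itself preclude a further simple eigenvalue in $(0,1)$. So in your write-up you should route both halves of (v)---the open gap and the threshold---through the same quantitative ODE analysis, citing \cite{costinhuangschlag2014,LY} for both.
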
 	
	\begin{proof}
		 The proof of $(i)$ can be found in \cite{helffer2013,Reed1978} and that of $(ii)$-$(iv)$ in \cite{Weinstein1985}. For the proof of $(vi)$, one can invoke results of Kato \cite{Kato1959}.
		
		The proof of $(v)$ has a little bit long history: Demanet and Schlag first numerically verified this assertion in \cite{Demanet2006}; by first establishing the piece-wise analytic approximation of the ground sate, Costin, Huang and Schlag proved it under the radial assumption in \cite{costinhuangschlag2014}; recently, using this approximation of $Q$ and Sturm's comparison theorem, Li and the third author of the present paper finally proved this result in the nonraidal setting in \cite{LY}.
	\end{proof}
	\begin{remark}
		 To characterize the spectrum of $L_0, L_1$ and even $L_\beta$ for $\beta\in(0,1)$ completely, we shall mention the notion `singular continuous part of the spectrum' (see \cite{Reed1978} for the definition). Since the ground sate $Q$ is spherically symmetric and decays exponentially in the radial direction, we can apply \cite[Theorem XIII.21]{Reed1978} to see that the singular continuous part of the spectrum of $L_\beta$ is an empty set for each $\beta\in[0,1]$.
	\end{remark}
	
  In particular, we know from Theorem \ref{thm:known:1} the full spectrum of $L_0$. Note that $L_0$ and $L_\beta$ are the only non-zero entries of $\mathcal{D}$. Thus, in order to study the spectral properties of $\mathcal{D}$, we first work out the spectrum of $L_\beta$.
	
	\begin{theorem}\label{thm:main} For each fixed number $\beta\in(0,1)$, the following assertions hold.
		\begin{enumerate}[(i)]
			\item The operator $L_\beta$ has only one negative eigenvalue, say $\lambda_\beta$, over $L^2(\mathbb{R}^3)$. What's more, the corresponding eigenfunction is a smooth radial function that does not change sign.
			\item The point $0$ is not in the discrete spectrum of $L_\beta$ over $L^2(\mathbb{R}^3)$.
			\item The operator $L_\beta$ has no discrete spectrum in the open interval $(0,1)$ over $L^2_{\mathrm{rad}}(\mathbb{R}^3)$.
			\item The point $1$ is not an eigenvalue or a resonance of $L_\beta$ over $L^2_{rad}(\mathbb{R}^3)$.
			\item The interval $[1,\infty)$ is the essential spectrum of $L_\beta$ over $L^2(\mathbb{R}^3)$, which does not contain any embedded eigenvalue.
		\end{enumerate}	
	\end{theorem}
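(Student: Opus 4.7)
The plan is to exploit the monotonicity of the one-parameter family $\{L_\beta\}_{\beta \in [0,1]}$ together with the complete spectral description of $L_0$ and $L_1$ recorded in Theorem~\ref{thm:known:1}. Since $c_\beta := (3-\beta)/(1+\beta)$ is strictly decreasing on $[0,1]$, the quadratic form of $L_\beta$ is strictly increasing in $\beta$; in particular $L_0 \le L_\beta \le L_1$ as sesquilinear forms on $H^1(\mathbb{R}^3)$. This monotonicity, combined with the Courant--Fischer min-max principle and an angular-momentum decomposition, handles parts (i), (ii), (iii) and (v) directly; part (iv) will require a separate Birman--Schwinger argument at the threshold, and that is the hard part.

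Parts (v) and (i) are short. Since $Q$ decays exponentially, $c_\beta Q^2$ is a relatively compact perturbation of $-\Delta+1$, so Weyl's theorem yields $\sigma_{\mathrm{ess}}(L_\beta) = [1,\infty)$, and Kato's theorem \cite{Kato1959} excludes eigenvalues in $(1,\infty)$. For (i), the identity $-\Delta Q + Q = Q^3$ gives $\langle L_\beta Q, Q\rangle = (1-c_\beta)\|Q\|_{L^4}^4 < 0$ for $\beta \in (0,1)$, hence at least one negative eigenvalue; by min-max and $L_\beta \ge L_0$, the number of negative eigenvalues of $L_\beta$ is at most that of $L_0$, namely one, so it is exactly one. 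Uniqueness of the simple ground state, together with the $O(3)$-symmetry of $L_\beta$, forces the eigenfunction to be radial; positivity follows from the Perron--Frobenius principle for Schr\"odinger operators, and smoothness from elliptic regularity.

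For (ii) and (iii), I decompose into angular-momentum sectors $L_\beta = \bigoplus_{\ell \ge 0} L_\beta^{(\ell)}$. For $\ell = 1$, Theorem~\ref{thm:known:1} implies $-Q'$ is the positive ground state of $L_0^{(1)}$ at eigenvalue $0$, hence $L_0^{(1)} \ge 0$; for $\ell \ge 2$ the additional centrifugal term $(\ell(\ell+1)-2)/r^2$ only strengthens this. Writing $L_\beta^{(\ell)} = L_0^{(\ell)} + \tfrac{4\beta}{1+\beta}Q^2$, a relation $L_\beta^{(\ell)}\phi = 0$ with $\phi \ne 0$ would force $\int Q^2|\phi|^2 = 0$, a contradiction, so $L_\beta^{(\ell)} > 0$ strictly for $\beta > 0$. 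In the radial sector $\ell = 0$, Theorem~\ref{thm:known:1} implies the only discrete eigenvalue of $L_0^{(0)}$ below $1$ is $\lambda_0$ (since $\ker L_0$ is spanned by the non-radial $\partial_{x_i}Q$), so the second Courant--Fischer value of $L_0^{(0)}$ equals $\inf \sigma_{\mathrm{ess}} = 1$. By monotonicity, the second Courant--Fischer value of $L_\beta^{(0)}$ is $\ge 1$, which rules out any discrete eigenvalue of $L_\beta^{(0)}$ in $[0,1)$ and yields (iii) together with the remaining cases of (ii).

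The main obstacle is part (iv). Passing to the half line via $u = r\phi$, a threshold eigenvalue or resonance of $L_\beta^{(0)}$ at $1$ corresponds to a solution of $u''+c_\beta Q^2 u = 0$ with $u(0) = 0$ that is $L^2$ or merely bounded, respectively. To rule this out for every $c := c_\beta \in (1,3)$, I plan to use the Birman--Schwinger principle at threshold: the compact positive operator $K_c := cQ(-\partial_r^2)_D^{-1}Q$ on $L^2(0,\infty)$, with $(-\partial_r^2)_D^{-1}$ the integral operator of kernel $\min(r,s)$, satisfies $K_c = cK_1$, so its eigenvalues are linear in $c$ and the ``critical couplings'' $\{c : 1 \in \sigma(K_c)\}$ are exactly those $c$ admitting a threshold state. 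As $c$ crosses a critical coupling, the count
\[
N(c) := \#\bigl\{\text{eigenvalues of } -\partial_r^2 + 1 - cQ^2 \text{ on } L^2(0,\infty) \text{ (Dirichlet) strictly below } 1\bigr\}
\]
jumps upward by at least one. On the other hand, the operator is monotone decreasing in $c$, so $N(c)$ is monotone non-decreasing. Theorem~\ref{thm:known:1} gives $N(1) = N(3) = 1$: at $c = 1$ the unique eigenvalue below $1$ is $0$, corresponding to $Q$; at $c = 3$ on the radial sector it is $\lambda_0$, since the $0$-eigenfunctions $\partial_{x_i} Q$ are not radial. Hence $N \equiv 1$ on $[1,3]$, no critical coupling lies in $(1,3)$, and (iv) follows.
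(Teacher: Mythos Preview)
Your argument is correct and takes a genuinely different route from the paper's.

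For (i), (v) you agree with the paper. For (ii) on the sectors $\ell\ge 1$ the paper derives a Wronskian identity by pairing the eigenvalue equation with $Q'(r)r^2$ and integrating by parts; your argument---that $L_0^{(1)}\ge 0$ because $-Q'>0$ is its Perron--Frobenius ground state, whence $L_\beta^{(\ell)}=L_0^{(\ell)}+\tfrac{4\beta}{1+\beta}Q^2>0$ for $\ell\ge1$---is shorter and more conceptual. For (iii) the paper proves an ODE shooting lemma (their Claim~4.4): writing $F_\epsilon=r\psi$ with $\epsilon=1-\lambda$, they show via Sturm comparison first that $F_\epsilon$ must change sign, and then, by comparing with an auxiliary solution $G$ of $G''=-3Q^2G$ whose post-zero linear growth is established in \cite{LY}, that $F_\epsilon(r)\ge c_0r$ for large $r$, hence $\psi\notin L^2$. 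Your min-max argument, using that the second Courant--Fischer value of $L_0^{(0)}$ already equals $1$, bypasses all of this ODE work. For (iv) the paper reuses the same shooting estimate at $\epsilon=0$ to get $F_0(r)\ge c_0r$, which kills both the eigenvalue and the resonance; your Birman--Schwinger counting argument is different in spirit: since the threshold operator $K_c=cK_1$ is compact with eigenvalues $c\mu_n$, the critical couplings are $c_n=1/\mu_n$, and $N(1)=N(3)=1$ (both endpoints being non-critical by Theorem~\ref{thm:known:1}(v)) forces $\mu_1>1$ and $\mu_n\le 1/3$ for $n\ge2$, so no $1/c_\beta\in(1/3,1)$ is an eigenvalue of $K_1$.

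What each approach buys: the paper's Sturm-comparison method is self-contained modulo the input from \cite{LY}, yields the explicit linear lower bound $F_0(r)\ge c_0 r$ (hence the resonance exclusion with room to spare), and its Wronskian identity for (ii) does not rely on Perron--Frobenius. Your approach reduces everything to the already-known spectral data for $L_0$ and $L_1$ in Theorem~\ref{thm:known:1} plus monotonicity in $\beta$, avoiding the auxiliary function $G$ and the result of \cite{LY} entirely; it is softer and would transplant to other one-parameter interpolations between operators with known gap properties. The one place you should add a citation is the threshold Birman--Schwinger correspondence (bounded zero-energy solution on the half line $\Longleftrightarrow 1\in\sigma(K_c)$, and the identification $N(c)=\#\{n:c\mu_n>1\}$ at non-critical $c$); this is standard---see for instance Klaus--Simon on coupling-constant thresholds---but it is the hinge of your argument for (iv) and deserves an explicit reference.
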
	

	\begin{remark}
		Indeed, over the space of square integrable radial functions, any real number is not a resonance:
		Theorem C.4.2 in \cite{Simon1982} implies that any real number $\lambda <1$ is not a resonance for $L_\beta$; it follows from \cite[Section 3]{Agmon1975} that any real number $\lambda>1$ is a resonance.
	\end{remark}

	An immediate corollary is
	\begin{corollary}\label{cor:main}
		Fix $\beta\in(0,1)$. Over the product space $L^2_{\mathrm{rad}}(\mathbb{R}^3)\times L^2_{\mathrm{rad}}(\mathbb{R}^3)$, the spectrum of $\mathcal{D}$ is given by
		\begin{equation*}
			\{\lambda_0\}\cup \{\lambda_\beta\}\cup [1,\infty).
		\end{equation*}
		What's more, $\lambda_0$ and $\lambda_\beta$ are simple eigenvalues, the point $1$ is not an eigenvalue or a resonance, and the set $[1,\infty)$ is the essential spectrum with no embedded eigenvalues.
	\end{corollary}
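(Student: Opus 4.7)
The plan is to exploit the block-diagonal form $\mathcal{D}=L_0\oplus L_\beta$ and reduce the corollary to statements already established for $L_0$ and $L_\beta$ individually. Since both scalar operators are Schr\"odinger operators with radial potentials, each preserves $L^2_{\mathrm{rad}}(\mathbb{R}^3)$, so $\mathcal{D}$ restricts to a block-diagonal operator on $L^2_{\mathrm{rad}}\times L^2_{\mathrm{rad}}$, and its spectrum there equals the union of the spectra of the two blocks on $L^2_{\mathrm{rad}}$.

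The first step is to read off the two scalar spectra in the radial sector. By Theorem~\ref{thm:known:1} the spectrum of $L_0$ on $L^2(\mathbb{R}^3)$ is $\{\lambda_0\}\cup\{0\}\cup[1,\infty)$, with $\lambda_0$ simple and with radial ground-state eigenfunction, and with kernel spanned by $\partial_{x_1}Q,\partial_{x_2}Q,\partial_{x_3}Q$. Those kernel elements transform as first-order spherical harmonics and hence are \emph{not} radial, so the eigenvalue $0$ drops out upon passage to $L^2_{\mathrm{rad}}$. By Theorem~\ref{thm:main}, the radial spectrum of $L_\beta$ is $\{\lambda_\beta\}\cup[1,\infty)$ with $\lambda_\beta$ simple, no eigenvalue or resonance at $1$, and no embedded eigenvalue above $1$. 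Taking the union yields the claimed set $\{\lambda_0\}\cup\{\lambda_\beta\}\cup[1,\infty)$.

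The remaining assertions follow componentwise. If $\mathcal{D}(u_1,u_2)=\lambda_0(u_1,u_2)$, then $L_\beta u_2=\lambda_0 u_2$; but $\lambda_0<0$ is not in $\sigma(L_\beta|_{\mathrm{rad}})$ (one can note, via monotonicity of the quadratic form in $\beta$ since $\frac{d}{d\beta}\frac{3-\beta}{1+\beta}=-\frac{4}{(1+\beta)^2}<0$, that $\lambda_\beta>\lambda_0$ for $\beta\in(0,1)$), so $u_2=0$ and the simplicity of $\lambda_0$ for $\mathcal{D}$ reduces to that for $L_0$; the case of $\lambda_\beta$ is symmetric. The essential spectrum of a block-diagonal operator equals the union of the essential spectra, giving $[1,\infty)$; an embedded eigenfunction of $\mathcal{D}$ at some $\mu>1$ would decompose into embedded eigenfunctions of $L_0$ or $L_\beta$, contradicting Theorems~\ref{thm:known:1}(vi) and \ref{thm:main}(v). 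The same componentwise splitting handles the threshold: a bounded, non-$L^2$, radial solution of $\mathcal{D}\phi=\phi$ would force a radial resonance of $L_0$ or $L_\beta$ at $1$, which Theorems~\ref{thm:known:1}(v) and \ref{thm:main}(iv) preclude.

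The only point at which care is needed is checking that the notion of \emph{resonance} for the matrix operator $\mathcal{D}$, in whichever precise form Section~\ref{sec:pre} adopts, genuinely decouples under the block-diagonal structure. Given the explicit diagonal form of $\mathcal{D}$, this should amount to a short unpacking of definitions rather than a substantive obstacle, and I do not expect any serious technical difficulty in the corollary beyond this bookkeeping.
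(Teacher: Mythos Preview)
Your proposal is correct and follows essentially the same approach as the paper: both arguments exploit the block-diagonal structure $\mathcal{D}=\mathrm{diag}(L_0,L_\beta)$ to reduce all spectral questions (eigenvalues, essential spectrum, embedded eigenvalues, threshold resonance) to the corresponding scalar statements for $L_0$ and $L_\beta$ already established in Theorems~\ref{thm:known:1} and~\ref{thm:main}. Your treatment is slightly more explicit than the paper's in one place: you argue $\lambda_\beta>\lambda_0$ via monotonicity of the potential in $\beta$ to ensure the two negative eigenvalues of $\mathcal{D}$ are distinct and hence each simple, whereas the paper's proof of the corollary leaves this point implicit (it is available separately as Proposition~\ref{lem:2}).
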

	\begin{proof}
		We first determine the eigenvalues of $\mathcal{D}$. Let $\psi_0$ be the eigenfunction of $L_0$ corresponding to $\lambda_0$ and $\psi_\beta$ the eigenfunction of $L_\beta$ corresponding to $\lambda_{\beta}$. Then one can check that $\lambda_0$ and $\lambda_{\beta}$ are eigenvalues of $\mathcal{D}$ with their corresponding eigenfunctions $(\psi_0,0)^{\mathsf{T}}$ and $(0,\psi_\beta)^{\mathsf{T}}$ respectively. We shall show they are the only eigenvalues of $\mathcal{D}$. Let $\lambda$ be an eigenvalue of $\mathcal{D}$, then we take a non-zero eigenfunction $h=(h_1,h_2)^{\mathsf{T}}\in H^2_{rad}(\mathbb{R}^3)\times H^2_{rad}(\mathbb{R}^3)$ so that
		\begin{equation}
			L_0h_1=\lambda h_1,\ \ \mathrm{and}\ \ L_{\beta}h_2=\lambda h_2.
		\end{equation}
		If $h_1$ is not a zero function, then it follows that $\lambda$ is an eigenvalue of $L_0$. Recalling that the situation here is in the radial setting, we see that $\lambda_0$ is the unique eigenvalue of $L_0$ and hence $\lambda=\lambda_0$. If $h_2$ is a non-zero function, we can use the same argument to conclude $\lambda=\lambda_{\beta}$. In particular, the point $1$ is not an eigenvalue.
		
		It follows from Theorems \ref{thm:known:1} and \ref{thm:main} that $\sigma_{ess}(L_0)=\sigma_{ess}(L_{\beta})=[1,\infty)$. Then we conclude from Proposition \ref{prop:abs:2} that $\sigma_{ess}(\mathcal{D})=[1,\infty)$. What's more, since $\lambda_0$ and $\lambda_{\beta}$ are all the eigenvalues of $\mathcal{D}$ and they are both smaller than the bottom edge $1$, the operator $\mathcal{D}$ does not have any embedded eigenvalues in $\sigma_{ess}(\mathcal{D})$.
		
		We know from Theorems \ref{thm:1} and \ref{thm:main} that the threshold point `1' is not a resonance for both $L_0$ and $L_\beta$. We then infer directly from Definition \ref{def:1} that it is not a resonance for $\mathcal{D}$.
	\end{proof}

	\begin{remark}
	As is remarked by Marzuola and Simpson (see \cite{Marzuola2010}) in the case of Schr\"{o}dinger equation, the existence of both bottom resonances and embedded eigenvalues requires `specific nature' of the potentials and they are believed to be rare. We indeed here verified this philosophy for the system of coupled Schr\"{o}dinger equations, even though the corresponding ground state solutions and hence the potentials enjoy very nice properties like spherically symmetry, exponential decaying and smoothness.
	\end{remark}
	
	\begin{remark}
		Specifically, Corollary \ref{cor:main} implies the spectral instability of the ground sate of the coupled Klein-Gordon systems
 	\begin{equation}\label{intr:eq:damping}
 		\left\{
 		\begin{split}
 			\partial_t^2 u_1-\Delta u_1 +u_1 -u_1^3 -\beta u_1u_2^2&=0\\
 			\partial_t^2 u_2-\Delta u_2 +u_2 -u_2^3 -\beta u_1^2u_2&=0
 		\end{split}
 		\right.\ \ \ \ \ \beta>0
 	\end{equation}
 provided that the ground state $(Q_1,Q_2)\in H^1\times H^1$ is unique (see Remark \ref{remark:1}). However, as in the case of Schr\"{o}dinger equation considered in \cite{Weinstein1985}, we expect that this ground state be orbitally stable.
 It is worth mentioning that, according to Theorem \ref{thm:main}, one might mimic the general theory of orbital stability and instability of abstract Hamiltonian systems proposed by Grillakis, Shatah and Stauss \cite{GSS1987} to study this issue.
	\end{remark}

	This article is preceding as follows: in Section \ref{sec:pre}, we will introduce functional notations and recall elementary notions in spectral theory; in Section \ref{sec:bground}, we will derive our matrix operator $\mathcal{D}$ by a linearization process; in the last Section \ref{sec:proof}, we will prove our main Theorem \ref{thm:main}.

\section{Preliminaries}\label{sec:pre}
	In this section, we give the symbolic notations in this article and recall notions concerning spectrum of self-adjoint operators.
	
	We will use $L^2$ to denote the space of square integrable real-valued functions defined on the whole Euclidean space $\mathbb{R}^3$ and $L^2_{rad}$ the space consisting of radial functions in $L^2$. We also use $H^1$ to denote the subspace of $L^2$, each element of which has square integrable gradients. Similarly, we also use $H^1_{rad}$ to denote the subspace of $H^1$, whose elements are spherically symmetric.

	Since the functions we consider here are real valued, we restrict ourselves to real Hilbert spaces. We first recall
	\begin{definition}
		Let $\left(\mathsf{H},\langle\cdot,\cdot\rangle\right)$ be a real Hilbert space, and $\left(\mathrm{Dom}(A),A\right)$ be densely defined linear operator from $\mathrm{Dom}(A)$ to $\mathsf{H}$.
		\begin{enumerate}
			\item Denote $\mathrm{Dom}(A^\ast):=\left\{u\in \mathsf{H}: \exists u^\ast\in \mathsf{H}\ \mathrm{s.t.}\ \langle u,Av\rangle = \langle u^\ast,v\rangle,\forall v\in\mathrm{Dom}(A)\right\}$ and set $A^\ast u:=u^\ast$ for all $u\in\mathrm{Dom}(A^\ast)$. Note that for each $u\in \mathrm{Dom}(A^\ast)$, $u^\ast$ is uniquely determined and hence $A^\ast$ is well defined. We say $(\mathrm{Dom}(A^\ast),A^\ast)$ the adjoint of $(\mathrm{Dom}(A),A)$.
			\item  $\left(\mathrm{Dom}(A),A\right)$ is said to be self-adjoint if $\left(\mathrm{Dom}(A),A\right)=\left(\mathrm{Dom}(A^\ast),A^\ast\right)$
		\end{enumerate}
 	\end{definition}
	Usually, one can use {Kato-Rellich theorem (\cite{helffer2013})} to see if the perturbation of the Laplace operator $-\Delta$ by adding a potential is self-adjoint or not.
 	{In the following, we will denote $(\mathrm{Dom}(A),A)$ simply as $A$, if there is no need to emphasize its defining domain.} We next recall
 	\begin{definition}
 		Let $\left(\mathrm{Dom}(A),A\right)$ be a self-adjoint operator over some real Hilbert space $\left(\mathsf{H},\langle\cdot,\cdot\rangle\right)$.
 		\begin{enumerate}
 			\item The subset $\rho(A):=\left\{\lambda\in\mathbb{C}:A-\lambda I:\mathrm{Dom}(A)\rightarrow\mathsf{H}\ \mathrm{is\ bijective}\right\}$ of the complex plane $\mathbb{C}$ is called the resolvent set of $A$.
 			\item  The set $\sigma(A):=\mathbb{C}\backslash\rho(A)$ is called the spectrum of $A$.
 			\item A number $\lambda$ is an eigenvalue of $A$ if the equation $(A-\lambda I)u=0$ admits a non-zero solution in $\mathsf{H}$. The point spectrum $\sigma_{pt}(A)$ of $A$ consists of eigenvalues of $A$.
 			\item The subset $\sigma_c(A):=\left\{\lambda\in\mathbb{C}:A-\lambda I \mathrm{\ is\ one\ to\ one\ but\ not\ onto}\right\}$\footnote{ The set of residual spectrum of self-adjoint operator $A$ is empty. }  is called the continuous spectrum of $A$.
 		\end{enumerate}
 	\end{definition}
 	
	In some case, an eigenvalue can be embedded into the continuous spectrum. In order to distinguish these embedded eigenvalues with others, we introduce the notion of discrete spectrum.
	\begin{definition}
		Let $A$ be a self-adjoint operator on a real Hilbert space $\mathsf{H}$.
		\begin{enumerate}
			\item The discrete spectrum $\sigma_{disc}(A)$ of $A$ is the set, consisting of isolated points of the spectrum which correspond to eigenvalues, the eigenspace of which is of finite dimension.
				
			\item The essential spectrum $\sigma_{ess}(A)$ of $A$ is defined to be $\sigma(A)\backslash\sigma_{disc}(A)$.
		\end{enumerate}
	\end{definition}	

	To {see} whether or not one point belongs to essential spectrum, we will use
	\begin{theorem}[Weyl's criterion, \cite{helffer2013}]\label{thm:weyl}
		Let $T$ be a self-adjoint operator. Then $\lambda$ belongs to the essential spectrum if and only if there exists a sequence $u_n$ in $\mathrm{Dom}(T)$ with $\|u_n\|=1$, $u_n\rightharpoonup_{n\rightarrow\infty}0$ and $\|(T-\lambda)u_n\|\rightarrow_{n\rightarrow\infty}0$.
	\end{theorem}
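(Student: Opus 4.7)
The plan is to deduce Weyl's criterion directly from the spectral theorem for a self-adjoint operator $T$. Writing $T = \int t \, dE(t)$ with projection-valued spectral measure $E$, the starting point is the standard reformulation
\[
\lambda \in \sigma_{ess}(T) \iff \dim \mathrm{Ran}\, E((\lambda - \epsilon, \lambda + \epsilon)) = \infty \ \text{ for every } \ \epsilon > 0,
\]
since $\lambda \notin \sigma_{ess}(T)$ forces $\lambda$ to be either in $\rho(T)$ (so $E(J) = 0$ for a small open interval $J$ about $\lambda$) or an isolated eigenvalue of finite multiplicity (so $E(J) = E(\{\lambda\})$ has finite rank for small $J$). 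I will take this as my starting point and then prove the two implications of Theorem \ref{thm:weyl} in turn.

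For the direction $\lambda \in \sigma_{ess}(T) \Rightarrow$ existence of the Weyl sequence, choose $\epsilon_n \downarrow 0$. Because each range $\mathrm{Ran}\, E((\lambda - \epsilon_n, \lambda + \epsilon_n))$ is infinite-dimensional, I can inductively select unit vectors $u_n$ inside them with $u_n$ orthogonal to $u_1, \ldots, u_{n-1}$. Orthonormality of $(u_n)$ gives $\|u_n\| = 1$ and $u_n \rightharpoonup 0$, while the functional calculus yields
\[
\|(T - \lambda) u_n\|^2 = \int (t - \lambda)^2 \, d\langle E(t) u_n, u_n \rangle \leq \epsilon_n^2 \to 0,
\]
because the spectral measure of each $u_n$ is supported in $(\lambda - \epsilon_n, \lambda + \epsilon_n)$. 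This produces the desired Weyl sequence.

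For the converse, suppose such a sequence $(u_n)$ exists and argue by contradiction that $\lambda \in \sigma_{ess}(T)$. If $\lambda \in \rho(T)$, then $(T - \lambda)^{-1}$ is bounded and $1 = \|u_n\| \leq \|(T - \lambda)^{-1}\| \cdot \|(T - \lambda) u_n\| \to 0$, which is impossible. If instead $\lambda \in \sigma_{disc}(T)$, let $P$ be the orthogonal projection onto the finite-dimensional eigenspace of $\lambda$. Since $P$ has finite rank and $u_n \rightharpoonup 0$, one has $P u_n \to 0$ strongly. Because $(T - \lambda)$ vanishes on $\mathrm{Ran}(P)$, one has $(T - \lambda) u_n = (T - \lambda)(I - P) u_n$, and the isolation of $\lambda$ in $\sigma(T)$ gives a bounded inverse of $T - \lambda$ on $\mathrm{Ran}(I - P)$; hence $(I - P) u_n \to 0$, again contradicting $\|u_n\| = 1$.

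The main obstacle is packaging the spectral-theorem reformulation invoked in the first paragraph. The delicate half of that equivalence is: if $\dim \mathrm{Ran}\, E((\lambda - \epsilon, \lambda + \epsilon)) < \infty$ for some $\epsilon > 0$, then $\lambda$ is isolated in $\sigma(T)$ with a finite-dimensional eigenspace. This follows by noting that on that finite-dimensional invariant subspace the restriction of $T$ is a self-adjoint operator on a finite-dimensional space, hence decomposes into finitely many eigenspaces whose corresponding eigenvalues lie in $(\lambda - \epsilon, \lambda + \epsilon)$; shrinking $\epsilon$ then isolates $\lambda$ and identifies $\mathrm{Ran}\, E(\{\lambda\})$ with its eigenspace. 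Once this reformulation is secured, both directions of Weyl's criterion reduce to routine manipulations with the functional calculus and the bounded-inverse theorem.
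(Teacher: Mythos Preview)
Your proof is correct and is the standard spectral-theorem argument for Weyl's criterion. Note, however, that the paper does not actually prove this theorem: it is stated with a citation to Helffer's textbook \cite{helffer2013} and used as a black box, so there is no ``paper's own proof'' to compare against. Your write-up would serve as a self-contained justification of a result the authors simply import from the literature.
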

	Using this result, we will determine the essential spectrum of a matrix operator of the diagonal form, from that of its diagonal entities.
\subsection{Abstract Theory}	\label{sec:abs:theory}
	Let $(\mathrm{Dom}(T_1),T_1)$ and $\left(\mathrm{Dom}(T_2),T_2\right)$ be two densely defined operators on the Hilbert space $\mathsf{H}$. We then consider the operator of matrix form
	\begin{equation}
		\mathcal{T}:=\begin{bmatrix}
			T_1& 0\\
			0& T_2
		\end{bmatrix}
	\end{equation}
	with domain $\mathrm{Dom}(T_1)\times \mathrm{Dom}(T_2)$ on $\mathsf{H}\times\mathsf{H}$. We would like to withdraw the information about spectrum of $\left(\mathrm{Dom}(T_1)\times\mathrm{Dom}(T_2),\mathcal{T}\right)$ from those about $T_1$ and $T_2$.

	{Considering the form of our operator $\mathcal{D}$}, we will assume $(\mathrm{Dom}(T_1),T_1)$ and $(\mathrm{Dom}(T_2),T_2)$ are both self-adjoint on $\mathsf{H}$. Then the operator $\left(\mathrm{Dom}(T_1)\times\mathrm{Dom}(T_2),\mathcal{T}\right)$ is also a self-adjoint operator on $\mathsf{H}\times\mathsf{H}$ and hence $\sigma(\mathcal{T})\subset\mathbb{R}$. What's more, its essential spectrum is completely determined by those of $T_1$ and $T_2$.
\begin{proposition}\label{prop:abs:2} Under the above assumptions on $T_1$ and $T_2$, we have
	$$\sigma_{ess}(T_1)\cup \sigma_{ess}(T_2)=\sigma_{ess}(\mathcal{T})$$
\end{proposition}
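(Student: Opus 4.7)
The plan is to prove both inclusions via Weyl's criterion (Theorem \ref{thm:weyl}), exploiting the fact that $\mathcal{T}-\lambda$ decouples into $T_1-\lambda$ and $T_2-\lambda$ acting on the two factors.

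For the inclusion $\sigma_{ess}(T_1)\cup\sigma_{ess}(T_2)\subseteq \sigma_{ess}(\mathcal{T})$, I would take $\lambda\in\sigma_{ess}(T_1)$ and pick, by Weyl's criterion, a sequence $\{u_n\}\subset\mathrm{Dom}(T_1)$ with $\|u_n\|=1$, $u_n\rightharpoonup 0$ and $\|(T_1-\lambda)u_n\|\to 0$. Setting $v_n:=(u_n,0)^{\mathsf{T}}\in\mathrm{Dom}(T_1)\times\mathrm{Dom}(T_2)$, all three defining properties transfer verbatim: $\|v_n\|_{\mathsf{H}\times\mathsf{H}}=\|u_n\|=1$, $v_n\rightharpoonup 0$, and $\|(\mathcal{T}-\lambda)v_n\|=\|(T_1-\lambda)u_n\|\to 0$. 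Hence $\lambda\in\sigma_{ess}(\mathcal{T})$ by Weyl's criterion again. The case $\lambda\in\sigma_{ess}(T_2)$ is symmetric.

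For the reverse inclusion, suppose $\lambda\in\sigma_{ess}(\mathcal{T})$ and pick a Weyl sequence $v_n=(u_n^{1},u_n^{2})^{\mathsf{T}}$. Since the operator is block-diagonal, the three Weyl conditions translate into
\begin{equation*}
\|u_n^{1}\|^2+\|u_n^{2}\|^2=1,\quad u_n^{1}\rightharpoonup 0,\ u_n^{2}\rightharpoonup 0,\quad \|(T_1-\lambda)u_n^{1}\|^2+\|(T_2-\lambda)u_n^{2}\|^2\to 0,
\end{equation*}
so in particular $\|(T_i-\lambda)u_n^{i}\|\to 0$ for $i=1,2$. Now by the pigeonhole principle, along some subsequence (still indexed by $n$) at least one of the two quantities $\|u_n^{1}\|,\|u_n^{2}\|$ is bounded below by $1/\sqrt{2}$; say $\|u_n^{1}\|\geq 1/\sqrt{2}$. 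I would then renormalize by setting $w_n:=u_n^{1}/\|u_n^{1}\|\in\mathrm{Dom}(T_1)$, so $\|w_n\|=1$, and $\|(T_1-\lambda)w_n\|\leq \sqrt{2}\,\|(T_1-\lambda)u_n^{1}\|\to 0$.

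The only delicate point—and the main obstacle—is verifying that the renormalization preserves weak convergence to zero. This step, however, is handled by observing that for each $\phi\in\mathsf{H}$ the lower bound $\|u_n^{1}\|\geq 1/\sqrt{2}$ together with the upper bound $\|u_n^{1}\|\leq 1$ gives $|\langle w_n,\phi\rangle|\leq \sqrt{2}\,|\langle u_n^{1},\phi\rangle|\to 0$. Hence $w_n\rightharpoonup 0$ and, invoking Weyl's criterion a third time, $\lambda\in\sigma_{ess}(T_1)$. The symmetric case $\|u_n^{2}\|\geq 1/\sqrt{2}$ yields $\lambda\in\sigma_{ess}(T_2)$, which completes the proof.
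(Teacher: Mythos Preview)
Your proof is correct and follows essentially the same route as the paper: both directions are handled via Weyl's criterion, with the forward inclusion by embedding a Weyl sequence for $T_i$ into the $i$-th component, and the reverse by pigeonholing on the component norms, passing to a subsequence, and renormalizing. Your version is in fact slightly more explicit than the paper's in justifying that the renormalized sequence still converges weakly to zero.
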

\begin{proof}
	Let $\lambda\in \sigma_{ess}(T_1)\cup \sigma_{ess}(T_2)$. For convenience, we may assume $\lambda\in\sigma_{ess}(T_1)$. We shall use a Weyl sequence of $T_1$ corresponding to $\lambda$ to construct one for {$\mathcal{T}$ and the same $\lambda$}. The key observation is
	\begin{equation}\label{eq:03210}
		T_1h=\lambda h\ \Leftrightarrow \begin{bmatrix}
			T_1& 0\\
			0& T_2
		\end{bmatrix} \begin{bmatrix} h\\ 0\end{bmatrix} =\mathcal{T} \begin{bmatrix} h\\ 0\end{bmatrix} =\lambda \begin{bmatrix} h\\ 0\end{bmatrix}.
	\end{equation}
	with $\left\|(h,0)^{\mathsf{T}}\right\|_{\mathsf{H}\times\mathsf{H}}=\left\|h\right\|_{\mathsf{H}}$.
	By {Weyl's criterion (Theorem \ref{thm:weyl})}, we take $h_n\in\mathrm{Dom}(T_1)$ to be a Weyl sequence corresponding to $(T_1,\lambda)$. Then by \eqref{eq:03210}, $(h_n,0)^\mathsf{T}\in\mathrm{Dom}(\mathcal{T})$ is a Weyl sequence corresponding to $(\mathcal{T},\lambda)$. This implies $\lambda\in\sigma_{ess}(\mathcal{T})$, with the help of Theorem \ref{thm:weyl}.

	{For the inverse inclusion, we take $\lambda\in\sigma_{ess}(\mathcal{T})$. By Theorem \ref{thm:weyl}, we pick a Weyl's sequence $(h_n,f_n)^{\mathsf{T}}\in\mathrm{Dom}(\mathcal{T})$ corresponding to $(\mathcal{T},\lambda)$, satisfying
	\begin{equation}
		\left\|(h_n,f_n)^{\mathsf{T}}\right\|_{\mathsf{H}\times\mathsf{H}}=1,\ (h_n,f_n)^{\mathsf{T}}\rightharpoonup_{n\rightarrow\infty}(0,0)^{\mathsf{T}},\ \textrm{and}\ \left\|(\mathcal{T}-\lambda)(h_n,f_n)^{\mathsf{T}}\right\|_{\mathsf{H}\times\mathsf{H}}\rightarrow_{n\rightarrow\infty}0.
	\end{equation}	
	Using Pigeon's hole principle, we can take a subsequence $(h_{n_k},f_{n_k})^{\mathsf{T}}\in\mathrm{Dom}(\mathcal{T})$, one component (say for instance the first component) of which satisfies	
	\begin{equation}
		\left\|h_{n_k}\right\|_{\mathsf{H}}\geq \frac{1}{2},\forall k\geq 1; h_{n_k}\rightharpoonup_{k\rightarrow\infty}0,\ \mathrm{and}\ \left\|(T_1-\lambda)h_{n_k}\right\|_{\mathsf{H}}\rightarrow_{k\rightarrow\infty}0.
	\end{equation}	
	By a renormalization argument, we can obtain a Weyl's sequence for $(T_1,\lambda)$. It then follows from Theorem \ref{thm:weyl} that $\lambda\in\sigma_{ess}(T_1)$, which completes the proof of Proposition \ref{prop:abs:2}.}
\end{proof}

	At the end of this section, we recall the notion of resonances. We will use the definition for scalar equation as in \cite{kenigmendelson,costinhuangschlag2014}. We also {adopt} the notion of resonances {as in \cite{Schlag},}  since our operator $\mathcal{D}$ is of the matrix form.
	\begin{definition}\label{def:1}
		Let $\lambda$ be a real number. We say that $\mathcal{D}$ has a resonance at $\lambda$, by meaning that there exists a distributional solution $f=(f_1,f_2)^{\mathsf{T}}\notin L^{2}\times L^2$ to the equation
		\begin{equation}
			\mathcal{D}f=\lambda f
		\end{equation}
		that satisfies
		\begin{equation*}
			\int_{\mathbb{R}^3}\left(|f_1(x)|^2+|f_2(x)|^2\right)(1+|x|)^{-2\gamma}dx<\infty,\ \ \forall \gamma>\frac{1}{2}.
		\end{equation*}
		{If $\mathcal{D}$ has a resonance at $\lambda$, we also say simply $\lambda$ is a resonance of $\mathcal{D}$, or a resonance of $\mathcal{D}$ occurs at $\lambda$.}
	\end{definition}

\section{Derivation of the matrix operator $\mathcal{D}$}\label{sec:bground}	
	In this section, we are going to recall how the system \eqref{intr:eq:1} is solved and to derive $\mathcal{D}$.
	
\subsection{Solutions to the system of {Schr\"{o}dinger} equations}
	Recall that the system we are going to consider is
	\begin{equation}\label{eq:1}
		\left\{
		\begin{split}
			-\Delta u_1 +u_1 -u_1^3 -\beta u_1u_2^2&=0\\
			-\Delta u_2 +u_2 -u_2^3 -\beta u_1^2u_2&=0
		\end{split}
		\right.
	\end{equation}
	where $u_i:\mathbb{R}^3\ni x\longmapsto u_i(x)\in\mathbb{R}, i=1,2$ are the unknowns. This system is solved by the variation argument as follows. We first associate to \eqref{eq:1} the energy functional
	\begin{equation}\label{eq:2}
		E[u_1,u_2]:=\frac{1}{2}\int_{\mathbb{R}^3}\left[\sum_{i=1,2}\left(\left|\nabla u_i\right|^2+|u_i|^2\right)\right]dx -\frac{1}{4}\int_{\mathbb{R}^3}\left[u^4_1+u^4_2+2\beta u_1^2u_2^2\right]dx.
	\end{equation}
	
	{Usually, solutions to \eqref{eq:1} that are of much interest is these, each component of which is not identically zero.} In order to obtain such solutions, we introduce the following constraint set
		\begin{equation*}
		\mathcal{N}_1:=\left\{(u_1,u_2)\in \left(H^1\backslash\{0\}\right)^2: \int_{\mathbb{R}^3}\left(\left|\nabla u_i\right|^2+|u_i|^2\right)dx = \int_{\mathbb{R}^3}\left[u^4_i+\beta u_1^2u_2^2\right]dx, i=1,2\ \ \right\}.
	\end{equation*}

	For the following constraint minimizing problems
	\begin{align}\label{minipro}
		A&:=\inf_{(u_1,u_2)\in\mathcal{N}_1}E[u_1,u_2],\\
		A_r&:=\inf_{(u_1,u_2)\in\mathcal{N}_1\cap \left(H^1_{rad}\times H^1_{rad}\right)}E[u_1,u_2],
	\end{align}
	Sirakov proved
	\begin{theorem}[\cite{sirakov07}]\label{thm:1}
		For each $\beta\in{[0,+\infty)}$, one has
		\begin{enumerate}[(i)]
			\item $A=A_r$;
			\item both of these minimum are attained by the couple
			\begin{equation*}
				\left(\sqrt{\frac{1}{1+\beta}} Q,\sqrt{\frac{1}{1+\beta}} Q\right)=:(Q_1,Q_2)
			\end{equation*}
			where $Q(x)$ is the unique positive ground state solution to \eqref{eq:kwong}.
		\end{enumerate}
	\end{theorem}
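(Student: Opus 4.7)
The plan is to minimize $E$ on the Nehari-type constraint set $\mathcal{N}_1$, reduce to the radial setting by Schwarz symmetrization, and identify the minimum with the explicit symmetric couple $(Q_1,Q_2)$ via a saturation analysis of the sharp Gagliardo--Nirenberg--Weinstein inequality.

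I would first establish the upper bound. Testing \eqref{eq:kwong} against $Q$ produces the scalar Nehari identity $\int(|\nabla Q|^2+Q^2)\,dx=\int Q^4\,dx$, and substituting $u_1=u_2=Q/\sqrt{1+\beta}$ into the two constraints defining $\mathcal{N}_1$ reduces each component to $\frac{1}{1+\beta}\int Q^4\,dx$. Hence $(Q_1,Q_2)\in\mathcal{N}_1$ with $E[Q_1,Q_2]=\frac{1}{2(1+\beta)}\int Q^4\,dx$, giving $A\le A_r\le\frac{1}{2(1+\beta)}\int Q^4\,dx$.

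For claim (i), I apply Schwarz symmetrization to any $(u_1,u_2)\in\mathcal{N}_1$. The P\'olya--Szeg\H{o} inequality gives $\|\nabla u_i^*\|_2\le\|\nabla u_i\|_2$; equimeasurability preserves the $L^2$ and $L^4$ norms; and Riesz's rearrangement inequality yields $\int(u_1^*)^2(u_2^*)^2\,dx\ge\int u_1^2u_2^2\,dx$. The symmetrized pair generally leaves $\mathcal{N}_1$, but a two-parameter rescaling $(t_1u_1^*,t_2u_2^*)$ with $t_i\in(0,1]$ determined by the $2\times 2$ linear system
\begin{equation*}
\|u_i^*\|_{H^1}^2=t_i^2\int(u_i^*)^4\,dx+\beta\,t_{3-i}^2\int(u_1^*)^2(u_2^*)^2\,dx,\quad i=1,2,
\end{equation*}
restores both constraints. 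Since $E=\frac{1}{4}(\|u_1\|_{H^1}^2+\|u_2\|_{H^1}^2)$ on $\mathcal{N}_1$, the rescaled symmetrized pair has no larger energy, proving $A_r\le A$; the reverse inclusion is trivial.

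The main obstacle is the matching lower bound in (ii). I would combine the sharp Weinstein inequality $\int u^4\,dx\le\|Q\|_4^{-4}\|u\|_{H^1}^4$ (saturated iff $u$ is a scalar multiple of a translate of $Q$) with the Cauchy--Schwarz bound $\int u_1^2u_2^2\,dx\le(\int u_1^4\,dx)^{1/2}(\int u_2^4\,dx)^{1/2}$ (saturated iff $u_1\propto u_2$). Writing $A_i=\|u_i\|_{H^1}^2$ and $Z=\int u_1^2u_2^2\,dx$, the Nehari identities give $\int u_i^4\,dx=A_i-\beta Z$, so Weinstein reads $A_i^2-\|Q\|_4^4A_i+\|Q\|_4^4\beta Z\ge 0$. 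Taking the upper root $A_i\ge\tfrac{1}{2}\bigl(\|Q\|_4^4+\sqrt{\|Q\|_4^8-4\|Q\|_4^4\beta Z}\bigr)$ and optimizing $A_1+A_2$ subject to the Cauchy--Schwarz bound on $Z$ produces the sharp inequality $A_1+A_2\ge\frac{2\|Q\|_4^4}{1+\beta}$, with equality forcing simultaneous saturation of both inequalities. This pins down $u_1\equiv u_2$ proportional to $Q$, and Nehari then forces the constant to be $1/\sqrt{1+\beta}$; hence $E\ge\frac{1}{2(1+\beta)}\int Q^4\,dx$, matching the upper bound and identifying the minimizer as $(Q_1,Q_2)$.
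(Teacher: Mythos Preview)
The paper does not prove this theorem at all; it is quoted from Sirakov \cite{sirakov07} and used as a black box, so there is no in-paper argument to compare your sketch against. I can only comment on the internal soundness of your proposal.

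The decisive gap is in your lower bound for (ii). From the Weinstein inequality you correctly obtain the quadratic constraint $A_i^2-qA_i+q\beta Z\ge 0$ (with $q=\|Q\|_4^4$), but this only says that each $A_i$ lies \emph{outside} the interval between the two roots; you then ``take the upper root'' with no justification. This is not merely cosmetic: for $\beta>1$ the asserted minimizer $(Q_1,Q_2)$ has $A_i=q/(1+\beta)$ and $Z=q/(1+\beta)^2$, and a direct computation shows that $A_i$ equals the \emph{lower} root $\tfrac12\bigl(q-\sqrt{q^2-4q\beta Z}\bigr)$, so your branch choice is actually wrong in that range. Even for $\beta<1$ one can place $A_1$ on the lower branch (take $u_1=\varepsilon\phi$ small and adjust $u_2$ so that both Nehari constraints hold; then $A_1=O(\varepsilon^2)$ while $\beta Z\approx A_1$, and the quadratic constraint is satisfied with $A_1$ below the lower root). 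Your ``optimizing $A_1+A_2$ subject to the Cauchy--Schwarz bound on $Z$'' is also only a slogan: the Cauchy--Schwarz inequality $Z^2\le(A_1-\beta Z)(A_2-\beta Z)$ is implicit in all three variables, and you have not shown how to combine it with the two-branch root condition to force $A_1+A_2\ge 2q/(1+\beta)$.

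There is a secondary gap in the symmetrization step for (i). The linear system you write down for $(t_1^2,t_2^2)$ need not have its solution in $(0,1]^2$: for instance, if P\'olya--Szeg\H{o} is an equality for $u_1$ but strict for $u_2$ while Riesz increases the cross term, Cramer's rule gives $t_1^2=1+\frac{\beta b\,\delta_2}{a_1a_2-\beta^2b^2}>1$. Moreover for $\beta\ge1$ the matrix $\bigl(\begin{smallmatrix}a_1&\beta b\\ \beta b&a_2\end{smallmatrix}\bigr)$ can be singular (when $u_1^\ast\propto u_2^\ast$ and $\beta=1$) or indefinite, so the fiber may not meet $\mathcal N_1$ at all. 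The usual cure is to use the ``max over the fiber'' description of the Nehari energy rather than asserting $t_i\le1$, but that argument is not the one you wrote.
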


	{By a variational approach, minimizers for $A$ are weak solutions of System \eqref{eq:1} (see \cite[Lemma 3]{linwei08} or \cite[Remark 3]{sirakov07})}. For other types of solutions and the uniqueness of solutions, we give the following remarks.
	\begin{remark}\label{rem:1215}
		In fact, the system \eqref{eq:1} does possess solutions with one component being identically zero, say for instance $(Q,0)^{\mathsf{T}}$. Theoretically, using the Nehari manifold
		\begin{equation*}
			\mathcal{N}_0:=\left\{(u_1,u_2)\in H^1\times H^1\backslash\{(0,0)\}: \int_{\mathbb{R}^3}\left[\sum_{i=1,2}\left(\left|\nabla u_i\right|^2+|u_i|^2\right)\right]dx = \int_{\mathbb{R}^3}\left[u^4_1+u^4_2+2\beta u_1^2u_2^2\right]dx\right\},
		\end{equation*}
		as the constraint set, for the following minimizing problem
		\begin{equation*}
			A_0:=\inf_{(u_1,u_2)\in\mathcal{N}_0}E[u_1,u_2].
		\end{equation*}
		Sirakov also proved: $A_0$ is a positive number that is assumed by a couple $(u_1,u_2)$ of radial functions in $H^1\times H^1\backslash\{(0,0)\}$.
	\end{remark}

	\begin{remark}\label{remark:1}
		{In \cite{sirakov07}, Sirakov called the extremizers for $A_0$ nontrivial solutions to \eqref{eq:1}, and that for $A$ non-standard ones. In \cite[Remark 2]{sirakov07}, Sirakov conjectured that $(Q_1,Q_2)$ is the unique positive solution for \eqref{eq:1} up to translations.
		{This question has attracted several attentions}: using implicit function theorem, Wei and his collaborators \cite{linwei08,WY12} proved the uniqueness of such solutions in the cases $\beta>1$ and $0<\beta<\beta_0$ for some small $\beta_0$; based on some form of bifurcation method, Chen and Zou \cite{CZ13} proved this conjecture in the case $1>\beta>\beta_1$ for some $\beta_1$ close to $1$. {Whether or not this conjecture is true for} $\beta\in[\beta_0,\beta_1]$ is still open up to now. {What's more,} Chen and Zou \cite{CZ12} proved a weaker version of this conjecture, asserting that $(Q_1,Q_2)$ in Theorem \ref{thm:1} is the unique solution (up to translations) of minimizing problem \eqref{minipro}.}
	\end{remark}
	
\subsection{Derivation of $\mathcal{D}$}	
	Let $(Q_1,Q_2)$ be a nontrivial solution to \eqref{eq:1} (guaranteed by Theorem \ref{thm:1}). By linearizing \eqref{eq:1} at this solution, we arrive at the operator 	
	\begin{equation}
		\mathcal{L}:= \begin{bmatrix}
			& -\Delta +1-\frac{3+\beta}{1+\beta}Q^2  & \frac{-2\beta}{1+\beta}Q^{2}\\
			& \frac{-2\beta}{1+\beta}Q^{2} & -\Delta +1-\frac{3+\beta}{1+\beta}Q^2
		\end{bmatrix}
	\end{equation}
	{		Since $Q$ decays exponentially, one can use {Kato-Rellich theorem} to verify:
		\begin{enumerate}
			\item The domain of $\mathcal{L}$ is $H^2\times H^2=:\mathrm{Dom}(\mathcal{L})$.
			\item The densely defined operator $\left(\mathrm{Dom}(\mathcal{L}),\mathcal{L}\right)$ is a self-adjoint operator acting on $L^2\times L^2$. Consequently the spectrum of $\mathcal{L}$ is contained in the real line $\mathbb{R}$.
		\end{enumerate}
		}

	In order to diagonalize $\mathcal{L}$, we
	introduce an isometric {action} from $L^2\times L^2$ into itself induced by the matrix
	\begin{equation*}
		W:= \frac{1}{\sqrt{2}} \begin{bmatrix}
			& 1 &1\\
			& 1 & -1
		\end{bmatrix}
	\end{equation*}More precisely, this action is {given by}
	\begin{align*}
		W:& L^2(\mathbb{R}^3)\times L^2(\mathbb{R}^3)\rightarrow L^2(\mathbb{R}^3)\times L^2(\mathbb{R}^3)\\
		  & (h_1,h_2)^{\mathsf{T}}\longmapsto \left(\frac{h_1+h_2}{\sqrt{2}},\frac{h_1-h_2}{\sqrt{2}}\right)^{\mathsf{T}}.
	\end{align*}
	 We then diagonalize $\mathcal{L}$ as
	\begin{equation}\label{eq:03230}
		W^{-1}\mathcal{L}W = \begin{bmatrix}
									&-\Delta + 1-3Q^2  & 0\\
									& 0 & -\Delta +1 -\frac{3-\beta}{1+\beta}Q^2
							\end{bmatrix}\equiv\mathcal{D}.
	\end{equation}
	
	{This derivation of $\mathcal{D}$ seems not closely related to the original equation \eqref{eq:1}.} Indeed, this operator can be derived directly by the following linearizing procedure.
	Let $(u_1,u_2)$ be solution to \eqref{eq:1}, and the functions $v_1,v_2$ be defined implicitly by
	\begin{equation}
		(u_1,u_2)^{\mathsf{T}}=W(v_1,v_2)^{\mathsf{T}}.
	\end{equation}
	Then $v_1$ and $v_2$ solve the system
	\begin{equation}\label{eq:0420}
		\left\{
		\begin{split}
			-\Delta v_1+v_1-\frac{1}{4}\left[(v_1+v_2)^3+(v_1-v_2)^3\right]-\frac{\beta}{2}\left(v_1^2-v_2^2\right)v_1&=0\\
			-\Delta v_2+v_2-\frac{1}{4}\left[(v_1+v_2)^3-(v_1-v_2)^3\right]+\frac{\beta}{2}\left(v_1^2-v_2^2\right)v_2&=0
		\end{split}
		\right.
	\end{equation}
	
	Since $(Q_1,Q_2)^{\mathsf{T}}$ is a solution to \eqref{eq:1}, the transformed couple $(\tilde{Q}_1,\tilde{Q}_2)^{\mathsf{T}}:=W(Q_1,Q_2)^{\mathsf{T}} =\left(\sqrt{\frac{2}{1+\beta}}Q,0\right)^{\mathsf{T}}$ is a solution to \eqref{eq:0420}.
	Linearizing the system \eqref{eq:0420} at $(\tilde{Q}_1,\tilde{Q}_2)^{\mathsf{T}}$, gives rise to the linearized operator
	\begin{equation}
		\begin{bmatrix}
			-\Delta+1-3Q^2 & 0\\
			0& -\Delta+1-\frac{3-\beta}{1+\beta}Q^2
		\end{bmatrix}
	\end{equation}
	which is just the diagonal operator $\mathcal{D}$.
	
	\begin{remark}
		As is alluded to in Remark \ref{rem:1215}, the couple $(Q,0)$ is also a solution to \eqref{eq:1}. Linearizing \eqref{eq:1} at this nontrivial but standard solution, gives rise to the following diagonal operator
		\begin{equation}
			\mathcal{L}_1=\begin{bmatrix}
				&-\Delta +1 -3Q^2  &0 \\
				& 0  &-\Delta +1 -\beta Q^2
			\end{bmatrix}.
		\end{equation}	
		Noting that this operator for each $\beta\in(1,3)$ is exactly the operator $\mathcal{D}$ with the corresponding parameter being $\frac{3-\beta}{1+\beta}$, we can {thus} determine the spectrum of $\mathcal{L}_1$ for each such $\beta\in (1,3)$.
	\end{remark}

	\begin{remark}
	Indeed, Sirakov\cite{sirakov07} considered the following more general system
    \begin{equation}\label{eq:gen}
		\left\{
		\begin{split}
			-\Delta u_1 + u_1 -\mu_1u_1^3 -\beta u_1u_2^2&=0,\\
			-\Delta u_2 + u_2 -\mu_2u_2^3 -\beta u_1^2u_2&=0.
		\end{split}
		\right.
	\end{equation}
	and showed that it admits the ground state solution
$(Q_1,Q_2)=(\sqrt{k}Q,\sqrt{l}Q)$ where $k,l$ are determined by the linear system of equations
    \begin{equation}
    \begin{cases}
    k\mu_1+l\beta=1,\\
    k\beta+l\mu_2=1.
    \end{cases}
    \end{equation}
	By following the linarization approach in this section, we may linearize \eqref{eq:gen} at $(Q_1,Q_2)$ to obtain the operator:
	\begin{equation}
	\tilde{\mathcal{L}}=\begin{bmatrix}
	&-\Delta +1 -3 Q^2  & 0 \\
	& 0 &-\Delta +1 -\frac{3(\mu_1-\beta)(\mu_2-\beta)}{\mu_1\mu_2-\beta^2} Q^2
	\end{bmatrix}.
	\end{equation}
	When $\frac{3(\mu_1-\beta)(\mu_2-\beta)}{\mu_1\mu_2-\beta^2}\in (1,3)$, we can apply Theorem \ref{thm:main} to determine the spectrum of $\tilde{\mathcal{L}}$ in the radial case.

	We expect that the spectral problem associated to some even more general systems (see \cite{ac07,CZ12,sirakov07}) than \eqref{eq:gen} can also be considered.
	\end{remark}

\section{Proof of Theorem \ref{thm:main}}\label{sec:proof}
	We devote this section to proving our main result: Theorem \ref{thm:main}. For each $\beta\in(0,1)$, we recall
	\begin{equation}
		L_{\beta}=-\Delta+1-\frac{3-\beta}{1+\beta}Q^2.
	\end{equation}
	We first determine the essential spectrum of $L_\beta$.
	\begin{proof}[Proof of $(v)$] It follows from $\sigma(-\Delta)=\sigma_{ess}(-\Delta)=[0,\infty)$ that $\sigma_{ess}(-\Delta+1)=[1,\infty)$. Since
		the ground state $Q$ decays exponentially at infinity, the multiplication operator induced by the function $\frac{3-\beta}{1+\beta}Q^2$ is relatively compact perturbation of $-\Delta +1$. It then follows from {the Weyl Theorem} (see \cite{helffer2013}) that
		\begin{equation}
			\sigma_{ess}(L_\beta)=\sigma_{ess}(-\Delta +1)=[1,\infty)
		\end{equation}
		
		Note that the ground state solution $Q$ decays faster than the inverse of any given polynomial. This allows us to use the result in \cite{Kato1959} to conclude that $L_\beta$ has no embedded eigenvalue above the lower edge of its continuous spectrum. This finishes the proof of $(v)$ in Theorem \ref{thm:main}.
	\end{proof}

	About the discrete spectrum, we begin our argument by determining the negative ones.
	\begin{proof}[Proof of $(i)$]
		For the existence, using the equation satisfied by $Q$, we do some calculations to obtain
		\begin{align}
			\left\langle L_\beta Q,Q \right\rangle= -\frac{2(1-\beta)}{1+\beta}\left\langle Q^{4},1\right\rangle.
		\end{align}
		Thanks to $\beta<1$ and the fact that $Q$ does not vanish, we have $\left\langle L_\beta Q,Q\right\rangle<0$. This shows that $L_\beta$ must have at least one negative eigenvalue. Thus to obtain the asserted result, it suffices to show that it has at most one negative eigenvalue.
		
		{To achieve this aim, we can indeed mimic the proof for the same statement for $L_0$, see for instance \cite{nakanishischlag2011}}. But here, we present a concise proof, by invoking an application of Glaszmann's lemma (\cite{Birman1987}).
		
		Let $\mu_1\leq\mu_2$ be the first two eigenvalues of $L_\beta$. Noting that the first eigenvalue is a simple one, we can assume even strongly $\mu_1<\mu_2$. {As is shown previously, $\sigma_{ess}(L_\beta)=\sigma_{ess}(L_0)=[1,\infty)$ and $L_\beta\geq L_0$.} This allows us to conclude from \cite[Theorem 4 on Page 227]{Birman1987} with $\gamma=1$ that
		\begin{equation}\label{eq:04147}
			\mu_1\geq \lambda_0,\ \ \mu_2\geq 0.
		\end{equation}
		This result shows that the second eigenvalue (if it exists) of $L_\beta$ is nonnegative, which means there is at most one negative eigenvalue. This finishes the proof of $(i)$ in Theorem \ref{thm:main}.
	\end{proof}
	
	By further exploiting the comparison argument, we can compare these eigenvalues $\lambda_\beta$ {for different values of $\beta$}.
	\begin{proposition}\label{lem:2}
		Fix $0\leq \beta_1<\beta_2\leq 1$. Let  $\lambda_{\beta_1}$ and $ \lambda_{\beta_2}$ be the first eigenvalue of $L_{\beta_1}$ and $L_{\beta_2}$ respectively. Then $\lambda_{\beta_1}<\lambda_{\beta_2}$.
	\end{proposition}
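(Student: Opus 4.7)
The plan is to exploit the monotonicity of the coefficient $\frac{3-\beta}{1+\beta}$ in $\beta$: as $\beta$ increases, this coefficient decreases, so the potential in $L_\beta$ becomes less negative, i.e., $L_\beta$ increases as a quadratic form. A straightforward variational (min-max) argument, combined with the strict positivity of $Q$ and of the first eigenfunction, should then give the strict inequality.

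First I would verify, by a direct computation, that
\[
L_{\beta_2}-L_{\beta_1}
=\left(\frac{3-\beta_1}{1+\beta_1}-\frac{3-\beta_2}{1+\beta_2}\right)Q^2
=\frac{4(\beta_2-\beta_1)}{(1+\beta_1)(1+\beta_2)}\,Q^2,
\]
which is a strictly positive multiplication operator because $\beta_2>\beta_1$ and $Q(x)>0$ for every $x\in\mathbb R^3$. In particular $L_{\beta_2}\ge L_{\beta_1}$ as symmetric operators on $H^2(\mathbb R^3)$.

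Next I would apply the min--max characterization of the bottom of the spectrum. By part (i) of Theorem \ref{thm:main}, $\lambda_{\beta_i}$ is attained as the infimum of the Rayleigh quotient $\langle L_{\beta_i}\phi,\phi\rangle/\|\phi\|^2$; let $\psi_{\beta_2}\in H^2(\mathbb R^3)$ be a normalized eigenfunction of $L_{\beta_2}$ corresponding to $\lambda_{\beta_2}$. Testing the Rayleigh quotient of $L_{\beta_1}$ against $\psi_{\beta_2}$ gives
\[
\lambda_{\beta_1}\le\langle L_{\beta_1}\psi_{\beta_2},\psi_{\beta_2}\rangle
=\langle L_{\beta_2}\psi_{\beta_2},\psi_{\beta_2}\rangle
 -\langle (L_{\beta_2}-L_{\beta_1})\psi_{\beta_2},\psi_{\beta_2}\rangle
=\lambda_{\beta_2}-\frac{4(\beta_2-\beta_1)}{(1+\beta_1)(1+\beta_2)}\int_{\mathbb R^3}Q^2\,\psi_{\beta_2}^2\,dx.
\]
The final integral is strictly positive, since $\psi_{\beta_2}$ is a non-zero $L^2$ function and $Q^2>0$ everywhere, so the strict inequality $\lambda_{\beta_1}<\lambda_{\beta_2}$ follows immediately.

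I do not expect any substantial obstacle here: the argument is essentially the standard strict-monotonicity consequence of the min--max principle under a strictly positive operator perturbation. The only points that need to be in place are the existence of $\lambda_{\beta}$ as a genuine eigenvalue attaining the infimum of the Rayleigh quotient (secured by part (i), which also covers the endpoint $\beta_1=0$ via Theorem \ref{thm:known:1}(ii), and can be extended to $\beta_2=1$ by noting that $Q$ itself realizes $\langle L_1Q,Q\rangle=0$), and the sign computation for $\frac{3-\beta_1}{1+\beta_1}-\frac{3-\beta_2}{1+\beta_2}$, which is a routine fraction manipulation.
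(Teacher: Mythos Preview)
Your proof is correct and follows essentially the same route as the paper's: both test the Rayleigh quotient of $L_{\beta_1}$ against the first eigenfunction of $L_{\beta_2}$ and use that $L_{\beta_2}-L_{\beta_1}$ is a strictly positive multiplication operator. The paper merely phrases this as a contradiction argument rather than a direct inequality, but the underlying computation is identical (and your constant $\tfrac{4(\beta_2-\beta_1)}{(1+\beta_1)(1+\beta_2)}$ is the right one---the paper's displayed $3$ in place of $4$ and the stray extra $Q^2$ are typos).
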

	\begin{proof}
		Suppose otherwise that $\lambda_{\beta_1}\geq\lambda_{\beta_2}$. Take one eigenfunction $f_{\beta_2}$ such that $L_{\beta_2}f_{\beta_2}=\lambda_{\beta_2} f_{\beta_2}$.
		Since {$\lambda_{\beta_1}$} is the first eigenvalue of $L_{\beta_1}$, we have
		$$
		0\leq\left\langle(L_{\beta_1}-\lambda_{\beta_1})f_{\beta_2},f_{\beta_2})\right\rangle.
		$$
		On the other hand, from the assumption  $\lambda_{\beta_1}\geq\lambda_{\beta_2}$, we get
		$$\left\langle(L_{\beta_1}-\lambda_{\beta_1})f_{\beta_2},f_{\beta_2}\right\rangle \leq \left\langle(L_{\beta_1}-\lambda_{\beta_2})f_{\beta_2},f_{\beta_2}\right\rangle =-\frac{3(\beta_2-\beta_1)}{(1+\beta_1)(1+\beta_2)}Q^2\left\langle Q^2f_{\beta_2},f_{\beta_2}\right\rangle<0
		$$
		where in the last inequality we have used the fact that $\beta$ is strictly positive and {that both $Q$ and $f_{\beta_2}$ do not vanish.}
		Combining these two sides, we get $0<0$, which is impossible. This shows that $\lambda_{\beta_1}<\lambda_{\beta_2}$.
	\end{proof}	
	
	For the non-negative eigenvalues, we indeed have
	\begin{proposition}\label{prop:2}
		For each $\beta\in(0,1)$, the operator $L_\beta$, acting on $L^2_{rad}(\mathbb{R}^3)$, has no eigenvalue in $[0,1]$.
	\end{proposition}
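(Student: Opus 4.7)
The plan is to split the proof into two parts: first, to rule out eigenvalues in the half-open interval $[0,1)$ by a min-max comparison with $L_0$; second, to rule out an eigenvalue at the threshold $\lambda=1$ by a direct analysis of the reduced radial ODE.

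For the first part, I would exploit the operator identity
\[
L_\beta = L_0 + \frac{4\beta}{1+\beta}Q^2 \geq L_0
\]
in the quadratic-form sense on $L^2(\mathbb{R}^3)$. Since both $L_0$ and $L_\beta$ commute with rotations, they restrict to self-adjoint operators on $L^2_{\mathrm{rad}}(\mathbb{R}^3)$, and min-max monotonicity yields $\mu_k(L_\beta|_{L^2_{\mathrm{rad}}}) \geq \mu_k(L_0|_{L^2_{\mathrm{rad}}})$ for every $k\geq 1$. By Theorem \ref{thm:known:1}, $\sigma(L_0)=\{\lambda_0,0\}\cup[1,\infty)$; however the $0$-eigenspace is spanned by the non-radial functions $\partial_{x_i}Q$, so a short Fredholm alternative combined with an $SO(3)$-averaging argument shows $0\notin\sigma(L_0|_{L^2_{\mathrm{rad}}})$. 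Hence $\sigma(L_0|_{L^2_{\mathrm{rad}}})=\{\lambda_0\}\cup[1,\infty)$ and $\mu_2(L_0|_{L^2_{\mathrm{rad}}})=1$, whence $\mu_2(L_\beta|_{L^2_{\mathrm{rad}}})\geq 1$. Combined with $\mu_1(L_\beta|_{L^2_{\mathrm{rad}}})=\lambda_\beta<0$ from part (i) of Theorem \ref{thm:main}, this precludes any eigenvalue of $L_\beta|_{L^2_{\mathrm{rad}}}$ in $[0,1)$.

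For the threshold $\lambda=1$, I would reduce to the one-dimensional problem. Suppose $u\in L^2_{\mathrm{rad}}$ satisfies $L_\beta u=u$; then $\psi(r):=r\,u(r)$ solves
\[
-\psi''=V_\beta\psi \quad\textrm{on } (0,\infty), \qquad \psi(0)=0, \qquad \psi\in L^2(0,\infty).
\]
A bootstrap on the ODE shows $\psi\in H^2(0,\infty)$, so $\psi$ and $\psi'$ vanish at infinity. Integrating twice from $\infty$ produces the Volterra representation
\[
\psi(r)= -\int_r^\infty (t-r)\,V_\beta(t)\,\psi(t)\,dt.
\]
Setting $M(r):=\sup_{s\geq r}|\psi(s)|$ and $f(r):=\int_r^\infty (t-r)V_\beta(t)\,dt$, one extracts the contraction-type estimate $M(r)\leq f(r)\,M(r)$. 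Because $Q$ (hence $V_\beta$) decays exponentially, $f(r)\to 0$ as $r\to\infty$, so for $r$ sufficiently large $f(r)<1$ forces $M(r)=0$. Uniqueness for the Cauchy problem $-\psi''=V_\beta\psi$ then implies $\psi\equiv 0$, contradicting the assumed nontriviality of $u$.

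The main obstacle is the threshold case $\lambda=1$: the min-max inequality $\mu_2(L_\beta|_{L^2_{\mathrm{rad}}})\geq 1$ by itself cannot distinguish an embedded eigenvalue at the edge of the essential spectrum from $1$ being merely $\inf\sigma_{\mathrm{ess}}(L_\beta)$. Overcoming this requires exploiting the exponential decay of the ground state $Q$ via the Volterra bootstrap outlined above; the intermediate interval $[0,1)$ is comparatively routine.
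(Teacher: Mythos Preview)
Your argument is correct, but it follows a genuinely different route from the paper's own proof.

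The paper does \emph{not} split into the two regimes $[0,1)$ and $\{1\}$. Instead it gives a single ODE argument valid for every $\lambda\in[0,1]$: writing $F_\epsilon(r)=r\psi(r)$ with $\epsilon=1-\lambda$, it first uses Sturm comparison against the ground-state eigenfunction at $\lambda_\beta$ to force $F_\epsilon$ to change sign, then compares against the threshold solution $G$ of $G''=-3Q^2G$ (whose properties are imported from \cite{LY}) to obtain the quantitative growth $F_\epsilon(r)\ge c_0 r$ for $r\gg 1$, which contradicts $\psi\in L^2$. Your proof, by contrast, handles $[0,1)$ abstractly via the form inequality $L_\beta\ge L_0$ and min--max monotonicity (using the known radial gap for $L_0$ from Theorem~\ref{thm:known:1}), and treats the endpoint $\lambda=1$ by a Volterra/Jost-type contraction exploiting the exponential decay of $Q$.

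Both arguments are valid. Your Part~1 is slicker and more conceptual, but it leans on the full gap statement Theorem~\ref{thm:known:1}(v) for $L_0$, itself a nontrivial input. The paper's unified Sturm approach is more hands-on and, crucially, its output (the linear lower bound $F_\epsilon\ge c_0 r$) is immediately recycled to prove item~(iv), the absence of a threshold resonance; your Volterra argument only shows that an $L^2$ solution vanishes and would need a small extra step (ruling out a bounded solution with nonzero limit at infinity) before it could replace the paper's resonance proof.
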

	\begin{proof}
		{Let $\lambda \in[0,1]$.} Assume there exists a radial $H^2$ function $\psi$ satisfying $L_\beta\psi=\lambda \psi$. We know from the elliptic regularity theory (see for instance \cite{Ni1993}) that $\psi$ is smooth and decays exponentially at spatial infinity.
		Based on this regularity result, we are going to prove the following result, which embodies our key argument.
	\begin{claim}\label{lem:01:1}
		 $\psi$ is identically zero.
	\end{claim}
	\begin{proof}
		Recall $\lambda\in[0,1]$ and that $\psi$ satisfies $L_\beta\psi=\lambda\psi$. We argue by contradiction, assume that is not a zero function. Rewrite the equation
		\begin{equation}\label{eq:09301}
			\psi''+\frac{2}{r}\psi'-(1-\lambda)\psi+\frac{3-\beta}{1+\beta}Q^2\psi=0,\ r>0.
		\end{equation}	
		
		Denoting $\epsilon=1-\lambda\in [0,1]$ and $F_\epsilon := r\psi(r)$, we see that $F_\epsilon$ satisfies
		\begin{equation}
			F_\epsilon '' =\left(\epsilon-\frac{3-\beta}{1+\beta}Q^2\right) F_\epsilon
		\end{equation}
		Observe that $F_\epsilon(0)=0$. If the value of $F'_{\epsilon}(0)$ was zero, then it follows that $F_\epsilon$ is identically zero. Thus $F'_\epsilon$ does not vanish at $r=0$. Dividing $F_\epsilon$ by $-F'_\epsilon(0)$ and denoting the resulted function still by $F_\epsilon$, we are lead to consider
		\begin{equation}\label{eq:09302}
			\left\{
			\begin{split}
				F_\epsilon '' =\left(\epsilon-\frac{3-\beta}{1+\beta}Q^2\right) F_\epsilon\\
				F_\epsilon(0)=0, F_\epsilon'(0)=-1
			\end{split}
			\right. .
		\end{equation}
		
		\begin{claim}\label{claim:2}
			$F_\epsilon$ must change sign on $(0,\infty)$. In addition, by denoting by $r_\epsilon>0$ the first positive zero of $F_\epsilon$, we have $r_\epsilon\geq r_0$.	
		\end{claim}
		\begin{proof}
			Suppose otherwise $-F_\epsilon(r)$ remains positive for all $r>0$.
			{Note that $L_{\beta}$ has an unique negative eigenvalue $\lambda_\beta<0$ such that $L_\beta h=\lambda_\beta h$ for some strictly positive function $h\in L^2({\mathbb{R}^3})$}. Denoting $H:=rh$ and doing proper renormalization, we have the equations for $-F_\epsilon$ and $H$ respectively
			\begin{equation*}
				\left\{
				\begin{split}
					(-F_\epsilon)'' =\left(\epsilon-\frac{3-\beta}{1+\beta}Q^2\right)(- F_\epsilon)\\
					(-F_\epsilon)(0)=0,\;\;\;( -F_\epsilon)'(0)=1
				\end{split}
				\right. ,\;\;\;		
				\left\{
				\begin{split}
					H'' =\left(1-\lambda_\beta-\frac{3-\beta}{1+\beta}Q^2\right)H\\
					H(0)=0,\;\;\; H'(0)=1
				\end{split}
				\right. .
			\end{equation*}
			Since $\epsilon-\frac{3-\beta}{1+\beta}Q^2<1-\lambda_\beta-\frac{3-\beta}{1+\beta}Q^2$, it follows from Sturm's comparison theorem (see \cite{LY}) that $-F_{\epsilon}(r)\leq H(r)$ for all $r\geq 0$. On the other hand, we have the asymptotic behavior of $H\sim e^{-\sqrt{1-\lambda_\beta}r}$ and $-F_{\epsilon}\sim e^{-\sqrt{\epsilon}r}$ for all $r\gg 1$. Combining these two points, we get $\epsilon\geq 1-\lambda_\beta$, which contradicts with the assumption $\epsilon=1-\lambda<1-\lambda_\beta$. Thus $F_\epsilon$ changes sign.
			
			The second part of the asserted results, follows from another application of Sturm's comparison theorem. This completes the proof of Claim \ref{claim:2}.
		\end{proof}
		
		Next, we compare $F_\epsilon$ with the unique function $G$, which obeys
		\begin{equation}\label{eq:09303}
			\left\{
			\begin{split}
				G '' &=-3Q^2 G \\
				G(0)&=0, G'(0)=-1
			\end{split}
			\right. .
		\end{equation}
		It is shown in \cite{LY} that $G$ changes sign exactly once in $(0,+\infty)$. Here we denote the corresponding unique positive zero of  $G$ by $r_*$. Since $-\frac{3-\beta}{1+\beta}Q^2\geq -3Q^2 $, we infer from Sturm's comparison theorem that $r_0\geq r_*>0$. As a consequence, we have as well $r_\epsilon\geq r_0$.	
		
		Finally, the functions $F(r):=F_\epsilon(r+r_\epsilon)/F'_\epsilon(r_\epsilon)$ and $G_\ast(r):=G(r+r_\ast)/G'(r_\ast)$ satisfy respectively the following equations
		\begin{equation}
			\left\{
			\begin{split}
				F '' &=\left(\epsilon-\frac{3-\beta}{1+\beta}Q^2(r+r_\epsilon)\right) F\\
				F(0)&=0, F'(0)=1
			\end{split}
			\right. ,\;\;\;
			\left\{
			\begin{split}
				G_* '' &=(-3Q^2(r+r_\ast)) G_* \\
				G_*(0)&=0, G_*'(0)=1
			\end{split}
			\right. .	
		\end{equation}
		We now compare these two shifted functions.	Since $r_\ast\leq r_\epsilon$ and $Q$ is decreasing, $\epsilon-\frac{3-\beta}{1+\beta}Q^2(r+r_\epsilon)\geq -3Q^2(r+r_\ast)$. It follows from Sturm's comparison theorem that $F\geq G_*$. However, we recall Theorem 6.1 in \cite{LY} that $G_*(r)$ stays positive for $r>0$ and $\min_{r\geq 1} \frac{G_*(r)}{r}\geq c_0$ for some $c_0>0$. Combining these two points, we see that $F_\epsilon(r)\geq c_0r$ for all sufficiently large $r$, which implies that $F_\epsilon \notin L^2((0,+\infty),dr)$. This exclusion in turn implies the original function $\psi$ is not in $L^2(\mathbb{R}^3)$, contradicting the initial assumption. This finishes the proof of Claim \ref{lem:01:1}.
	\end{proof}	
		It follows from Claim \ref{lem:01:1} that $\psi\equiv0$, which implies there is no eigenvalue in $[0,1]$ for $L_\beta$, acting on $L^2_{rad}(\mathbb{R}^3)$. This completes the proof of Proposition \ref{prop:2}.
	\end{proof}
	
	Proposition \ref{prop:2} implies the item $(iii)$ in Theorem \ref{thm:main}. {Since} $L^2_{rad}(\mathbb{R}^3)$ is a strict subspace of $L^2(\mathbb{R}^3)$, the asserted result $(ii)$ in Theorem \ref{thm:main} can not be implied by this proposition. Thus we shall give
	
	\begin{proof}[Proof of (ii)]	
		Let $\psi\in H^2(\mathbb{R}^3)$ be a solution to
		\begin{equation}
			L_\beta\psi=0.
		\end{equation}
		We shall show
		{$\psi$ is a zero function}. We argue by contradiction, assuming that $\psi$ is a nonzero function. We will achieve the contradiction by analyzing each coefficient in the spherical expansion of $\psi$. For this, we first recall facts about eigenvalues and the corresponding eigenfunctions of the standard Laplacian on the two dimensional sphere $\mathbb{S}^2$.
		
		Let $\mu_k,e_k(\omega)$ be the eigen-datum for $-\Delta_{\mathbb{S}^2}$. Then we have
		\begin{itemize}
			\item $\mu_0=0<\mu_1=\mu_2=\mu_3=2<\mu_4\leq \cdots$
			\item $\{e_k\}_{k\geq0}$ is an orthonormal basis for $L^2(\mathbb{S}^2)$.
		\end{itemize}
		For each $k\geq 0$, we set
		\begin{equation}
			\psi_k(r):=\int_{\mathbb{S}^2}\psi(r,\omega)e_k(\omega)d\omega.
		\end{equation}
		Then $\psi_k(r)$ decays exponentially to zero as $r$ tends to infinity and it obeys the equation
		\begin{equation}\label{eq:04170}
			\psi_k''+\frac{2}{r}\psi_k'-\psi_k+\left(\frac{3-\beta}{1+\beta}Q^2-\frac{\mu_k}{r^2}\right)\psi_k=0,\ r>0.
		\end{equation}
	
		Note that for $k=0$, the equation \eqref{eq:04170} is just the equation \eqref{eq:09301} with $\lambda=0$. Thus Claim \ref{lem:01:1} implies that $\psi_0\equiv0$ on $(0,\infty)$. Therefore, we shall show that $\psi_k\equiv0$ for each $k\geq 1$.
		
	    The radial part of $\psi$ has been just proved to be identically zero. Thus $\psi_k$ is not identically equaling $0$ for some $k\geq 1$. By multiplying $\psi_k$ by $-1$ if necessary, we may assume $\psi_k(0)\geq 0$. {By a simple contradiction argument (the same as that at the beginning of the proof of Claim \ref{lem:01:1}), we can obtain $\psi_k(0)>0$}. Therefore, there exists a $\rho_k\in(0,\infty]$ so that
		$\psi_k(r)>0$ for $0<r<\rho_k$ and $\psi_k(\rho_k)=0$.
		
		Multiplying \eqref{eq:04170} by $Q'(r)r^2$, integrating the resulted equality over the interval $(0,\rho_k)$ and applying the integration by parts, we get
		\begin{equation}\label{eq:04171}
			\rho_k^2\psi'_k(\rho_k)Q'(\rho_k)+\int_0^{\rho_k}\left(Q'''+\frac{2}{r}Q''-Q'+{\frac{3-\beta}{1+\beta}}Q^2Q'\right)\psi_kr^2dr-\mu_k\int_0^{\rho_k}Q'\psi_kdr=0.
		\end{equation}
		Since $Q$ is the fundamental solution, it also satisfies the equation
		\begin{equation}
			Q'''+\frac{2}{r}Q''-\frac{2}{r^2}Q'-Q'+3Q^2Q'=0
		\end{equation}
		Inserting this into \eqref{eq:04171} and using the relationship between $L_0$ and $L_\beta$, we obtain
		\begin{equation}\label{eq:04172}
			\rho_k^2\psi'_k(\rho_k)Q'(\rho_k) +\left(2-\mu_k\right)\int_0^{\rho_k}Q'\psi_kdr-\frac{4\beta}{1+\beta}\int_0^{\rho_k} Q^2Q'\psi_k r^2dr=0.
		\end{equation}
		
		Noting that $Q'(r)<0$ for each $r>0$. At point $r=\rho_k$, {it is easy to see} that $\psi'_k(\rho_k)\leq 0$. Thus the first term on the left hand side of \eqref{eq:04172} is nonnegative. Together with the assumption $\beta >0$, we see the third term is strictly positive. It follows from $k\geq 1$ that $\mu_k\geq 2$ and hence $2-\mu_k\leq 0$, so that the second term is positive. Combining all these together, we see that the left hand side of \eqref{eq:04172} is strictly positive while the right hand side is zero. This gives the contradiction, since \eqref{eq:04172} is an identity. This completes the proof.
	
	\end{proof}
	
	Note that Proposition \ref{prop:2} also implies that the point `1' is not an eigenvalue of $L_\beta$ on $L^2_{rad}(\mathbb{R}^3)$, which is just the first half of $(iv)$. Therefore, to finish the proof, it suffices to prove
	\begin{claim}
		{The point `1' is not a resonance of $L_\beta$ over $L^2_{rad}(\mathbb{R}^3)$.}	
	\end{claim}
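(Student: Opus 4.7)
My plan is to reduce the claim to an ODE analysis that essentially recycles the argument of Proposition~\ref{prop:2} at the endpoint $\lambda=1$. Let $\psi\in L^2_{\mathrm{loc}}(\mathbb{R}^3)$ be a radial distributional solution of $L_\beta\psi=\psi$ satisfying $\int_{\mathbb{R}^3}(1+|x|)^{-2\gamma}|\psi|^2\,dx<\infty$ for every $\gamma>1/2$. Local elliptic regularity gives $\psi\in C^\infty(\mathbb{R}^3\setminus\{0\})$, and in the radial variable $F(r):=r\psi(r)$ solves
\begin{equation*}
F''+\tfrac{3-\beta}{1+\beta}\,Q^2(r)\,F=0\qquad\text{on }(0,\infty).
\end{equation*}
A short distributional test near the origin (using $\Delta(|x|^{-1})=-4\pi\delta_0$ and the fact that $\tfrac{3-\beta}{1+\beta}Q^2\psi\in L^1_{\mathrm{loc}}$ carries no delta component) rules out a $c/|x|$ singularity of $\psi$, forcing $F(0)=0$. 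If $F\equiv 0$ we are done; otherwise rescale so that $F'(0)=-1$ and argue by contradiction.

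I will then apply the machinery from the proof of Proposition~\ref{prop:2} verbatim at the endpoint $\epsilon=1-\lambda=0$. Since $\lambda_\beta<0$, the key inequality $\epsilon<1-\lambda_\beta$ still holds, so Claim~\ref{claim:2} yields a first positive zero $r_0\geq r_\ast$ of $F$ at which $F'(r_0)>0$. Setting $\widetilde F(r):=F(r+r_0)/F'(r_0)$ and comparing with the shifted function $G_\ast$ associated to \eqref{eq:09303}, the hypotheses of Sturm's comparison theorem are verified thanks to $r_0\geq r_\ast$ and the monotone decay of $Q$, giving $\widetilde F(r)\geq G_\ast(r)$ for all $r\geq 0$. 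Combining this with the lower bound $G_\ast(r)\geq c_0 r$ on $[1,\infty)$ from Theorem 6.1 of \cite{LY} produces
\begin{equation*}
F(r)\geq c_0\,F'(r_0)\,(r-r_0)\qquad\text{for all }r\geq r_0+1.
\end{equation*}

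Translating back to $\psi=F/r$, this gives $|\psi(r)|\geq c_1>0$ for all sufficiently large $r$, whence
\begin{equation*}
\int_{\mathbb{R}^3}(1+|x|)^{-2\gamma}|\psi|^2\,dx\;\geq\; c_1^2\int_{R_0}^\infty (1+r)^{-2\gamma}\,r^2\,dr,
\end{equation*}
and the right-hand side diverges for every $\gamma\leq 3/2$. Choosing for instance $\gamma=1\in(1/2,3/2]$ already contradicts the weighted integrability assumption on $\psi$, so $\psi\equiv 0$, and hence $1$ is not a resonance of $L_\beta$ over $L^2_{\mathrm{rad}}(\mathbb{R}^3)$.

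The step I expect to require the most care is the boundary condition $F(0)=0$: unlike the $L^2$-eigenvalue setting of Proposition~\ref{prop:2}, the weighted integrability alone does not a priori exclude a $c/|x|$ behavior at the origin, and one genuinely needs the distributional identity $\Delta(|x|^{-1})=-4\pi\delta_0$ together with the local integrability of $\tfrac{3-\beta}{1+\beta}Q^2\psi$ to annihilate the delta. Once this is in hand, the rest of the proof is a direct transcription of the Sturm-comparison analysis behind Proposition~\ref{prop:2} at $\epsilon=0$, with the $L^2(\mathbb{R}^3)$-contradiction for eigenvalues replaced by the polynomially weighted $L^2$-contradiction described above.
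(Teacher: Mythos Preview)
Your proposal is correct and follows essentially the same approach as the paper: both recycle the Sturm-comparison analysis of Claim~\ref{lem:01:1} at the endpoint $\epsilon=0$ to obtain the linear lower bound $F_0(r)\gtrsim r$ for large $r$, and then observe that this forces the polynomially weighted integral $\int_{\mathbb{R}^3}(1+|x|)^{-2\gamma}|\psi|^2\,dx$ to diverge for all $\gamma$ in the resonance range. Your attention to the boundary condition $F(0)=0$ via the distributional identity $\Delta(|x|^{-1})=-4\pi\delta_0$ is a welcome point of rigor that the paper leaves implicit when it reuses the initial-value problem \eqref{eq:09302} in the non-$L^2$ setting.
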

	\begin{proof}
		{Indeed, the proof of Claim \ref{lem:01:1} implies as well that the {threshold point} $1$ is not a resonance of $L_\beta$.} With the notations as in the proof of Claim \ref{lem:01:1}, we have
		\begin{equation}
			F_0(r)\geq c_0r,\ \ \forall r\geq 1,
		\end{equation}
		where $c_0$ is a strictly positive number. Using the change of variable, we can compute for any $\gamma \leq 3$
		\begin{equation}
			\int_{\mathbb{R}^3}|\psi(x)|^2(1+|x|)^{-\gamma}dx\geq c_1\int_{1}^\infty|F_0(r)|^2(1+r)^{-\gamma}dr\geq c_1c^2_0\int_1^\infty(1+r)^{-\gamma}r^2dr=\infty.
		\end{equation}	
		Here $c_1$ is another positive constant. By taking $\gamma=0$, we see that $\psi\notin L^2(\mathbb{R}^3)$. Combining this point with the above inequality, we can conclude from {the definition of resonance} that the threshold point $1$ is not a resonance.
		
	\end{proof}

%
%
%

 
\end{document}